\newtheorem{theo}{Theorem}[section]
\newtheorem{coro}[theo]{Corollary}
\def\mathrmdef#1{\expandafter\def\csname#1\endcsname{{\rm#1}}}
\def\mathsfdef#1{\expandafter\def\csname#1\endcsname{{\rm\sf#1}}}
\def\mathcaldef#1{\expandafter\def\csname#1\endcsname{{\mathcal#1}}}
\def\AAA{\mathbb{A}}
\def\BBB{\mathbb{B}}
\def\DDD{\mathbb{D}}
\def\SSS{\mathbb{S}}
\def\MMM{\mathbb{M}}
\def\NNN{\mathbb{N}}
\def\alg{\mathfrak{alg}}
\def\tttt{\mathfrak{t}}
\def\bbb{\mathfrak{B}}
\def\F{\mathcal{F}}
\def\TTTTT{\mathcal{T}}
\begin{document}  
\title{Pseudoalgebras and non-canonical isomorphisms} 
\author{Fernando Lucatelli Nunes}
\address{CMUC, Department of Mathematics, University of Coimbra, 3001-501 Coimbra, Portugal}
\eaddress{fernandolucatelli@mat.uc.pt}
\amsclass{18D05, 18C15, 18C20, 18D10}

\keywords{pseudomonads, lax morphisms, monoidal functors, braided monoidal categories, canonical morphisms, two-dimensional monad theory}

\thanks{Research partially supported by the Centre for Mathematics of the
University of Coimbra -- UID/MAT/00324/2013, funded by the Portuguese
Government through FCT/MCTES and co-funded by the 
European Regional Development Fund through the Partnership Agreement PT2020.}

\maketitle 
\begin{abstract}
Given a pseudomonad $\TTTTT $, we prove that a lax $\TTTTT$-morphism between pseudoalgebras
is a $\TTTTT$-pseudomorphism
if and only if there is a suitable (possibly non-canonical) invertible $\TTTTT $-transformation. This result 
encompasses several results on \textit{non-canonical isomorphisms}, including 
Lack's result on normal monoidal functors between braided monoidal categories, 
since it is applicable in any $2$-category of pseudoalgebras, such as the $2$-categories of 
 monoidal categories, cocomplete categories, bicategories, pseudofunctors and so on.
\end{abstract}

\section*{Introduction}
The problem of \textit{non-canonical isomorphisms}  consists of investigating whether, in a given situation, 
the existence of an invertible non-canonical transformation implies that a previously given canonical one is invertible
as well. In order to give a glimpse of our scope, we give some examples.

The first precise example is related to the study of preservation of colimits. Given any functor $F : \AAA\to \BBB $, assuming the existence of $\SSS$-colimits, there is an induced canonical natural transformation, induced by the image of the universal cocone of $D$ by $F$,  
$\colim _ \BBB   FD\longrightarrow F (\colim _ \AAA D) $ in $\BBB $, in which
$$\colim _ \BBB : \Cat \left[ \SSS , \BBB\right]\to \BBB , \qquad \colim _ \AAA : \Cat \left[ \SSS , \AAA\right]\to \AAA $$
are the functors that give the $\SSS $-colimits. We say that \textit{$F $ preserves $\SSS $-colimits} whenever this
canonical transformation is invertible. In this context, the problem of non-canonical isomorphisms, studied by Caccamo and Winskel~\cite{MR2207108}, is
to investigate under which conditions the existence of a natural isomorphism $\colim _ \BBB   FD\cong F (\colim _ \AAA D) $ implies that $F$
preserves $\SSS $-colimits. For instance, in the case of finite coproducts, \cite{MR2207108} proves that a functor $F$ preserves them if and only if there is a (possibly non-canonical) natural 
isomorphism as above and $F$ preserves initial objects.

Three other examples of non-canonical isomorphisms are given in \cite{MR2864762}. Namely:
\begin{enumerate}
\item Characterization of distributive categories: a category $\DDD $ with finite coproducts and products is distributive if, 
given any object $x $, the functor $x\times - : \DDD\to \DDD $ preserves finite coproducts. In this case, \cite{MR2864762} proves that the existence of an invertible 
natural transformation 
$\delta _ {{}_{(x, y, z)}}: (x\times y)\sqcup (x\times z)\longrightarrow x\times (y\sqcup z)  $   
implies that $\DDD $ is distributive.
\item Characterization of semi-additive categories: a category $\BBB $ with finite products and coproducts is semi-additive if (1) it is pointed and (2) the canonical natural transformation $\psi: -\sqcup -\longrightarrow -\times - $ induced by the identities and zero morphisms is invertible. In \cite{MR2864762},  it is shown that the existence of any natural isomorphism   $-\sqcup -\longrightarrow -\times - $ implies that $\AAA $ is semi-additive.
\item Braided monoidal categories: Lack proved that, in the presence of a suitably defined invertible non-canonical isomorphism, 
a normal monoidal functor is actually a strong monoidal functor. This result encompasses the common part of 
both situations above. 
\end{enumerate}

Two-dimensional monad theory~\cite{MR1673316, MR1764104} gives a unifying approach to study several aspects of
two-dimensional universal algebra~\cite{MR0360749, MR1007911,  MR3491845}. 
This fact is illustrated by the various examples of $2$-categories of (lax-/pseudo)algebras 
in the literature.
For this reason, 
results in two-dimensional monad theory usually gives light to a wide range of situations, 
having many applications, \textit{e.g.} \cite{MR1776428, MR3318158, 2016arXiv160604999L}. 

The aim of this note is to frame the problem of non-canonical isomorphisms in the context of $2$-dimensional monad theory: we show
that, given a pseudomonad $\TTTTT $, a lax $\TTTTT $-morphism $\mathtt{f} $ is a $\TTTTT $-pseudomorphism if and only if there is a suitable 
(possibly ``non-canonical'') invertible $\TTTTT $-transformation as defined in \ref{noncanonicalisomorphism}.
This result
encompasses the four situations above, generalizing Lack's result on strong monoidal functors and being applicable to study  analogues in several other instances, including results on the
$2$-categories of monoidal categories, pseudofunctors, cocomplete categories, categories with certain types of
colimits and any
other example of $2$-category of pseudoalgebras and lax morphisms.

In Section \ref{Basic} we fix terminology, giving basic definitions and known results on $2$-dimensional monad theory. Section \ref{Prenormal} gives the main result of this note. Finally, Section \ref{Applications} gives brief comments on particular cases, showing how all the 
situations above are encompassed by our main theorem, how we can study further examples (such as the case of monoidal functors between monoidal categories) and how the situation is simplified for Kock-Z\"{o}berlein pseudomonads (which includes, in particular, the cases of preservation of weighted colimits).

\section{Basics}\label{Basic}

In order to fix notation, we give basic definitions in this section. We also give some results assumed in Section \ref{Prenormal}.
Our setting is the tricategory $2\textrm{-}\CAT$ of $2$-categories, pseudofunctors, pseudonatural transformations
and modifications. We follow the notation and definitions of Section 2 of \cite{MR3491845}. 
We give the definitions of pseudomonads~\cite{MR1673316, MR1764104} and the associated $2$-category of lax algebras~\cite{2016arXiv160703087L} below.

\begin{defi}[Pseudomonad]
A \textit{pseudomonad} $\TTTTT $ on a $2$-category $\bbb $ consists of a sextuple $(\TTTTT,  m  , \eta , \mu,   \iota, \tau )$, in which $\TTTTT:\bbb\to\bbb $ is a pseudofunctor, $ m  : \TTTTT^2\longrightarrow \TTTTT,  \eta : \Id _ {{}_\bbb }\longrightarrow \TTTTT$ are pseudonatural transformations and
$\tau : \Id _ {{}_{\TTTTT}}\Longrightarrow (m)(\TTTTT\eta ) $, $\iota : (m)(\eta\TTTTT )\Longrightarrow \Id _ {{}_{\TTTTT}}$, $\mu : m\left(\TTTTT m\right)\Longrightarrow m\left( m\TTTTT \right)  $
are invertible modifications satisfying coherence equations~\cite{2016arXiv160703087L, MR1673316, MR1764104}.
\end{defi}

\begin{defi}[Lax algebras]\label{laxalgebras}
Let $\TTTTT = (\TTTTT,  m  , \eta , \mu,   \iota, \tau )$ be a pseudomonad on $\bbb $. We define the $2$-category $\mathsf{Lax}\textrm{-}\TTTTT\textrm{-}\Alg _\ell $
as follows:
\begin{itemize}
\renewcommand\labelitemi{--}
\item Objects: \textit{lax $\TTTTT$-algebras} are defined by $\mathtt{z}= (Z, \alg _ {{}_{\mathtt{z}}}, \overline{{\underline{\mathtt{z}}}}, \overline{{\underline{\mathtt{z}}}}_0 )$ in which $\alg _ {{}_{\mathtt{z}}}: \TTTTT Z\to Z $
is a morphism of $\bbb $ and $\overline{{\underline{\mathtt{z}}}}:\alg _ {{}_{\mathtt{z}}}\TTTTT(\alg _ {{}_{\mathtt{z}}})\Rightarrow \alg _ {{}_{\mathtt{z}}}m_{{}_{Z}}, \overline{{\underline{\mathtt{z}}}}_0: \Id _ {{}_{Z}}\Rightarrow\alg _ {{}_{\mathtt{z}}}\eta _ {{}_{Z}}  $ are $2$-cells of $\bbb $ satisfying the coherence axioms:
$$\xymatrix@C=4em{ 
\TTTTT ^3 Z\ar[r]^{\TTTTT ^2 (\alg _ {{}_{\mathtt{z}}} ) }\ar[rd]|-{\TTTTT (m _ {{}_{Z}}) }\ar[d]_{m_{{}_{\TTTTT Z}} }
&
\TTTTT ^2 Z\ar@{}[d]|{\xLeftarrow{\widehat{\TTTTT (\overline{{\underline{\mathtt{z}}}} )} }}\ar[rd]^{\TTTTT (\alg _ {{}_{\mathtt{z}}} )}
&\ar@{}[rdd]|{=}
&
\TTTTT ^3 Z\ar[r]^{\TTTTT ^2(\alg _ {{}_{\mathtt{z}}}) }\ar@{}[rd]|{\xLeftarrow{m _ {{}_{\alg _ {{}_{\mathtt{z}}} }}^{-1}  }}\ar[d]_{m _ {{}_{\TTTTT Z }} }
&
\TTTTT^2 Z\ar[d] ^{m _ {{}_{Z}} }\ar[rd]^{\TTTTT (\alg _ {{}_{\mathtt{z}}}) }
&
\\
\TTTTT^2 Z\ar@{}[r]|-{\xLeftarrow{\mu _ {{}_{ Z }}  }}\ar[rd]_-{m _ {{}_{Z}} }
&
\TTTTT ^2 Z\ar[d]_-{m _ {{}_{Z}} }\ar[r]^{\TTTTT (\alg _ {{}_{\mathtt{z}}} ) }\ar@{}[rd]|{\xLeftarrow{\overline{{\underline{\mathtt{z}}} }  }}
&
\TTTTT Z\ar[d]^{\alg _ {{}_{\mathtt{z}}} }
&
\TTTTT ^2 Z\ar[r]_-{\TTTTT (\alg _ {{}_{\mathtt{z}}} ) }\ar[rd]_-{m _ {{}_{Z}} }
&
\TTTTT Z\ar@{}[d]|{\xLeftarrow{\overline{{\underline{\mathtt{z}}} }  } } \ar@{}[r]|{\xLeftarrow{\overline{{\underline{\mathtt{z}}} }}  }\ar[rd]|-{\alg _ {{}_{\mathtt{z}}} }
&
\TTTTT Z\ar[d]^{\alg _ {{}_{\mathtt{z}}} }
\\
&
\TTTTT Z\ar[r]_ {\alg _ {{}_{\mathtt{z}}} }
&
Z
&
&
\TTTTT Z\ar[r]_ {\alg _ {{}_{\mathtt{z}}} }
&
Z
}$$
in which 
$\widehat{\TTTTT (\overline{{\underline{\mathtt{z}}}} )}:= \left( \tttt _{{}_{( \alg _ {{}_{\mathtt{z}}})  ( m _{{}_{Z}} ) }}\right)^{-1}\left(\TTTTT (\overline{{\underline{\mathtt{z}}}})\right) \left( \tttt _{{}_{( \alg _ {{}_{\mathtt{z}}} ) (\TTTTT (\alg _ {{}_{\mathtt{z}}})  ) }}\right)$ and the $2$-cells
$$\xymatrix{ 
\TTTTT Z\ar[rrr]^{\alg _ {{}_{\mathtt{z}}}}\ar[rd]^{\eta _ {{}_{\TTTTT Z}} }\ar@{=}[dd]
&
&
&
Z\ar[ld]_-{\eta _ {{}_{Z}} }\ar@{=}[dd]
&
\TTTTT Z\ar@{=}[rr]\ar@{=}[dd]\ar[rd]^{\TTTTT (\eta _ {{}_ {Z}} )}
&
&
\TTTTT Z\ar@{}[ld]|-{\xLeftarrow{\widehat{\TTTTT (\overline{{\underline{\mathtt{z}}}}_0) }} }\ar@{=}[d]
\\
\ar@{}[r]|-{\xLeftarrow{\iota _ {{}_{Z}} } }
&
\TTTTT ^2 Z\ar@{}[ru]|-{\xLeftarrow{\eta _ {{}_{\alg _ {{}_{\mathtt{z}}}}}^{-1} } }\ar[r]_{\TTTTT (\alg _ {{}_{\mathtt{z}}} ) }\ar[ld]^{m _ {{}_{Z}} }
\ar@{}[rd]|-{\xLeftarrow {\overline{{\underline{\mathtt{z}}} } } }
&
\TTTTT Z\ar@{}[r]|-{\xLeftarrow{\overline{{\underline{\mathtt{z}}} }_0 } }\ar[rd]_-{\alg _ {{}_{\mathtt{z}}} }
&
&
\ar@{}[r]|-{\xLeftarrow{\tau _ {{}_{Z}}^{-1} } }
&
\TTTTT ^2 Z\ar[r]_-{\TTTTT (\alg _ {{}_{\mathtt{z}}}) }\ar[ld]^- {m _{{}_{Z}} }\ar@{}[rd]|-{\xLeftarrow{\overline{{\underline{\mathtt{z}}} }} }
&
\TTTTT Z\ar[d]^-{\alg _ {{}_{\mathtt{z}}}}
\\
\TTTTT Z\ar[rrr]_{\alg _ {{}_{\mathtt{z}}} }
&
&
&
Z
&
\TTTTT Z\ar[rr]_-{\alg _ {{}_{\mathtt{z}}}}
&&
Z
}$$
are identities in which $\widehat{\TTTTT (\overline{{\underline{\mathtt{z}}}}_0 )} := \left(\tttt _ {{}_{( \alg _ {{}_{\mathtt{z}}} )(\eta _ {{}_{Z}} ) }}\right)^{-1} \left(\TTTTT (\overline{{\underline{\mathtt{z}}}}_0)  \right) 
\left( \tttt _ {{}_{\TTTTT Z}} \right)$.  

Recall that a lax $\TTTTT$-algebra $(Z, \alg _ {{}_{\mathtt{z}}}, \overline{{\underline{\mathtt{z}}}}, \overline{{\underline{\mathtt{z}}}}_0 )$ is a \textit{$\TTTTT $-pseudoalgebra} if $\overline{{\underline{\mathtt{z}}}}, \overline{{\underline{\mathtt{z}}}}_0 $ are invertible.

\item Morphisms: \textit{lax $\TTTTT$-morphisms} $\mathtt{f}:\mathtt{y}\to\mathtt{z} $ between lax $\TTTTT$-algebras $\mathtt{y}=(Y, \alg _ {{}_{\mathtt{y}}}, \overline{{\underline{\mathtt{y}}}}, \overline{{\underline{\mathtt{y}}}}_0 )$, $\mathtt{z}= (Z, \alg _ {{}_{\mathtt{z}}}, \overline{{\underline{\mathtt{z}}}}, \overline{{\underline{\mathtt{z}}}}_0 )$ are pairs $\mathtt{f} = (f, \left\langle \overline{\mathtt{f}}\right\rangle ) $ in which 
$f: Y\to Z  $
is a morphism in $\bbb $ and
$\left\langle   \overline{\mathtt{f}}\right\rangle :  \alg _ {{}_{\mathtt{z}}}\TTTTT (f) \Rightarrow f\alg _ {{}_{\mathtt{y}}}   $  
is a $2$-cell of $\bbb $ such that, defining $\widehat{\TTTTT (\left\langle   \overline{\mathtt{f}}\right\rangle )} : = \tttt ^{-1} _ {{}_{(f)(\alg _ {{}_{\mathtt{y}}}) }}  \TTTTT (\left\langle   \overline{\mathtt{f}}\right\rangle )\tttt _ {{}_{(\alg _ {{}_{\mathtt{z}}})(\TTTTT(f))  }}$, the equations

$$\xymatrix@C=2.4em{
&
\TTTTT ^2 Y\ar@{}[d]|-{\xLeftarrow{m_{{}_{f}}^{-1} } }\ar[ld]_ -{m _ {{}_{Y}} }\ar[r]^-{\TTTTT ^2(f) }
&
\TTTTT ^2 Z\ar[rd]^-{\TTTTT (\alg _ {{}_{\mathtt{z}}}) }\ar[ld]|-{m _ {{}_{Z}} }\ar@{}[dd]|-{\xLeftarrow{\,\,\overline{{\underline{\mathtt{z}}} }\, \,} }
&
\ar@{}[rdd]|-{=}
&
&
\TTTTT Z\ar[r]^{\alg _ {{}_{\mathtt{z}}}}\ar@{}[dd]|-{\xRightarrow{\widehat{\TTTTT (\left\langle   \overline{\mathtt{f}}\right\rangle)}}}
&
Z\ar@{}[d]|-{\xRightarrow{\left\langle\overline{\mathtt{f}}\right\rangle } }
&
\\
\TTTTT Y\ar[r]_{\TTTTT (f) }\ar[rd]_{\alg _ {{}_{\mathtt{y}}}}
&
\TTTTT Z\ar[rd]|-{\alg _ {{}_{\mathtt{z}}} }\ar@{}[d]|-{\xLeftarrow{\left\langle   \overline{\mathtt{f}}\right\rangle} }
&
&
\TTTTT Z\ar[ld]^-{\alg _ {{}_{\mathtt{z}}} }
&
\TTTTT ^2 Z\ar[ru]^-{\TTTTT (\alg _ {{}_{\mathtt{z}}} )}
&
&
\TTTTT Y\ar[lu]|-{\TTTTT (f) }\ar[r]^{\alg _ {{}_{\mathtt{y}}} }\ar@{}[d]|-{\xRightarrow{\,\overline{{\underline{\mathtt{y}}} }\, } }
&
Y\ar[lu]_ {f}
\\
&
Y\ar[r]_{f}
&
Z
&
&
&
\TTTTT ^2Y\ar[ru]|-{\TTTTT (\alg _ {{}_{\mathtt{y}}})}\ar[r]_{m _ {{}_{Y}} }\ar[lu]^{\TTTTT ^2(f)}
&
\TTTTT Y\ar[ru]_-{\alg _ {{}_{\mathtt{y}}} }
&
}$$
$$\xymatrix{ 
Y\ar[rr]^{f}\ar[d]_{\eta _ {{}_{Y}} }\ar@{}[rd]|-{\xLeftarrow{\eta _ {{}_{f}}^{-1}} }
&
&
Z\ar[ld]_-{\eta _ {{}_{Z}} }\ar@{=}[dd]\ar@{}[rdd]|-{=}
&
&
Y\ar[ld]_{\eta _ {{}_{Y}} }\ar@{=}[dd]
&
\\
\TTTTT Y\ar[r]_{\TTTTT (f)}\ar[d]_-{\alg _ {{}_{\mathtt{y}}} }\ar@{}[rd]|-{\xLeftarrow{\left\langle   \overline{\mathtt{f}}\right\rangle}}
&
\TTTTT Z\ar@{}[r]|-{\xLeftarrow{\overline{{\underline{\mathtt{z}}}}_0} }\ar[rd]_-{\alg _ {{}_{\mathtt{z}}} }
&
&
\TTTTT Y\ar@{}[r]|-{\xLeftarrow{\overline{{\underline{\mathtt{y}}}}_0 } }\ar[rd]_{\alg _ {{}_{\mathtt{y}}}}
&
&
\\
Y\ar[rr]_-{f}
&
&
Z
&
&
Y\ar[r]_{f}
&
Z
}$$
hold. Recall that a lax $\TTTTT $-morphism $\mathtt{f} = (f,  \left\langle \overline{\mathtt{f}}\right\rangle ) $ is called a $\TTTTT $-pseudomorphism if $\left\langle   \overline{\mathtt{f}}\right\rangle$ is an invertible $2$-cell. 

\item $2$-cells: a \textit{$\TTTTT $-transformation}  $\mathfrak{m} : \mathtt{f}\Rightarrow\mathtt{h} $ between lax $\TTTTT $-morphisms $\mathtt{f} = (f,\left\langle   \overline{\mathtt{f}}\right\rangle )$, $\mathtt{h} = (h, \left\langle\overline{\mathsf{h}}\right\rangle )$ is a $2$-cell $\mathfrak{m} : f\Rightarrow h $ in $\bbb $ such that the equation below holds.
$$\xymatrix{  \TTTTT Y\ar@/_4ex/[dd]_{\TTTTT (f) }
                    \ar@{}[dd]|{\xRightarrow{\TTTTT(\mathfrak{m}) } }
                    \ar@/^4ex/[dd]^{\TTTTT (h) }
										\ar[rr]^{\alg _ {{}_{\mathtt{y}}} } && 
										 Y\ar[dd]^{h }    
&&
\TTTTT Y \ar[rr]^{ \alg _ {{}_{\mathtt{y}}} }\ar[dd]_-{\TTTTT (f) }  &&
Y\ar@/_5ex/[dd]_{f}
                    \ar@{}[dd]|{\xRightarrow{\mathfrak{m}  } }
                    \ar@/^5ex/[dd]^{h }
\\
&\ar@{}[r]|{\xRightarrow{\hskip .2em \left\langle\overline{\mathsf{h}}\right\rangle  \hskip .2em } }   &
 &=& 
&\ar@{}[l]|{\xRightarrow{\hskip .2em \left\langle \overline{\mathtt{f}}\right\rangle  \hskip .2em } }  & 
\\
 \TTTTT Z\ar[rr]_ {\alg _ {{}_{\mathtt{z}}} } &&  Z
 &&
\TTTTT Z\ar[rr]_ {\alg _ {{}_{\mathtt{z}}} } && Z	 }$$

\end{itemize}

The compositions are defined in the obvious way and these definitions make $\mathsf{Lax}\textrm{-}\TTTTT\textrm{-}\Alg _ {\ell } $ a $2$-category. The full sub-$2$-category of the $\TTTTT$-pseudoalgebras of $\mathsf{Lax}\textrm{-}\TTTTT\textrm{-}\Alg _ {\ell } $ is denoted by $\mathsf{Ps}\textrm{-}\TTTTT\textrm{-}\Alg _ {\ell } $. Also, we consider the locally full sub-$2$-category  $\mathsf{Ps}\textrm{-}\TTTTT\textrm{-}\Alg  $ consisting of $\TTTTT $-pseudoalgebras and $\TTTTT$-pseudomorphisms. Finally, the inclusion is denoted by $\widehat{\ell }  : \mathsf{Ps}\textrm{-}\TTTTT\textrm{-}\Alg \to \mathsf{Lax}\textrm{-}\TTTTT\textrm{-}\Alg _ {\ell } $. 
\end{defi}

On one hand, if $\TTTTT = (\TTTTT,  m  , \eta , \mu,   \iota, \tau ) $  is a pseudomonad on $\bbb $, 
then $\TTTTT $ induces the \textit{Eilenberg-Moore biadjunction} $(L ^\TTTTT \dashv U^\TTTTT , \eta , \varepsilon ^\TTTTT , s ^\TTTTT , t ^\TTTTT ) $,
in which $L^\TTTTT , U ^\TTTTT $ are defined by
\small
\begin{eqnarray*}
\begin{aligned}
U^\TTTTT : \mathsf{Ps}\textrm{-}\TTTTT\textrm{-}\Alg & \to \bbb\\ 
(Z, \alg _ {{}_{\mathtt{z}}}, \overline{{\underline{\mathtt{z}}}}, \overline{{\underline{\mathtt{z}}}}_0 )&\mapsto Z\\
(f, \left\langle \overline{\mathtt{f}}\right\rangle ) &\mapsto f\\
\mathfrak{m} &\mapsto \mathfrak{m}
\end{aligned}
\qquad\qquad
\begin{aligned}
L^\TTTTT : \bbb &\to \mathsf{Ps}\textrm{-}\TTTTT\textrm{-}\Alg\\
             Z &\mapsto \left( \TTTTT (Z), m _ {{}_{Z}}, \mu _ {{}_{Z}}, \iota ^{-1} _ {{}_{Z}}\right)\\
						f &\mapsto \left(\TTTTT(f), m _ {{}_{f}}^{-1}\right)\\
						\mathfrak{m} &\mapsto \TTTTT(\mathfrak{m})						
\end{aligned}
\end{eqnarray*}
\normalsize
On the other hand, this biadjunction induces a pseudocomonad on $\mathsf{Ps}\textrm{-}\TTTTT\textrm{-}\Alg $, called the
\textit{Eilenberg-Moore pseudocomonad} and denoted herein by 
$\underline{\TTTTT } $ (see Remark 5.3 and Lemma 5.4 of \cite{MR3491845}). The underlying pseudofunctor of this pseudocomonad
can be extended to $\mathsf{Lax}\textrm{-}\TTTTT\textrm{-}\Alg _ {\ell } $ by composing the forgetful $2$-functor $\mathsf{Lax}\textrm{-}\TTTTT\textrm{-}\Alg _ {\ell }  \to \bbb $ with $\widehat{\ell } L ^\TTTTT $. By abuse of language, we denote this extension (and its restriction to $\mathsf{Ps}\textrm{-}\TTTTT\textrm{-}\Alg _ {\ell } $) by $\underline{\TTTTT } $ as well. It should be observed that
$\underline{\TTTTT }\, \mathtt{z}  $ is the free $\TTTTT $-pseudoalgebra on the underlying object of the lax $\TTTTT $-algebra $\mathtt{z} $,
while $\underline{\TTTTT } (f, \left\langle \overline{\mathtt{f}}\right\rangle ) =\left(\TTTTT(f), m _ {{}_{f}}^{-1}\right) $.

\begin{rem}[Counit]
Given a lax $\TTTTT$-algebra $(Z, \alg _ {{}_{\mathtt{z}}}, \overline{{\underline{\mathtt{z}}}}, \overline{{\underline{\mathtt{z}}}}_0 )$, we denote by $\varepsilon ^\TTTTT _ {{}_{\mathtt{z}}} $  the lax $\TTTTT$-morphism defined by:
$$\varepsilon ^\TTTTT _ {{}_{\mathtt{z}}} \coloneqq (\alg _ {{}_{\mathtt{z}}}, \overline{{\underline{\mathtt{z}}}}): 
\underline{\TTTTT }\mathtt{z}\to \mathtt{z} ,  $$ 
while, given a $\TTTTT $-pseudomorphism $\mathtt{f} = (f, \left\langle \overline{\mathtt{f}}\right\rangle ) $, 
we denote by $\varepsilon ^\TTTTT _ {{}_{\mathtt{f} }} $  the $\TTTTT $-transformation defined by $\left\langle \overline{\mathtt{f}}\right\rangle ^{-1} $. It should be noted that $\varepsilon ^\TTTTT $ restricted
to the pseudoalgebras is actually the counit of the pseudocomonad $\underline{\TTTTT }$.
\end{rem}

\section{Non-canonical isomorphisms}\label{Prenormal}

In this section, we prove our main result. We start with: 

\begin{defi}[$\mathtt{f}$-isomorphism]\label{noncanonicalisomorphism}
Let $\TTTTT $ be a pseudomonad on a $2$-category $\bbb $. Assume that 
$\mathtt{f} = (f, \left\langle \overline{\mathtt{f}}\right\rangle ):\mathtt{y}\to\mathtt{z}  $
is a lax $\TTTTT $-morphism. If it exists,  an invertible $\TTTTT $-transformation 
$$\xymatrix@C=5em{
\underline{\TTTTT }\mathtt{y}
\ar[r]^{\underline{\TTTTT } (\mathtt{f} ) }
\ar[d]_{\varepsilon ^\TTTTT _ {{}_{\mathtt{y} }}  }
\ar@{}[rd]|-{\xLeftarrow{\hskip 1em\psi  \hskip 0.9em } }
&
\underline{\TTTTT }\mathtt{z}
\ar[d]^{\varepsilon ^\TTTTT _ {{}_{\mathtt{z} }}  }
\\
\mathtt{y}
\ar[r]_{\mathtt{f}}
&
\mathtt{z}
} $$
is called an  \textit{$\mathtt{f}$-isomorphism}.
\end{defi} 

Roughly, these $\mathtt{f}$-isomorphisms play the role of the \textit{non-canonical isomorphisms} in the examples given in the introduction. It should be noted that, if the canonical transformation is invertible, then it is an $\mathtt{f}$-isomorphism as well. 
That is to say,  the first basic result about $\mathtt{f} $-isomorphisms is the following: for each $\TTTTT $-pseudomorphism $\mathtt{f} = (f, \left\langle \overline{\mathtt{f}}\right\rangle ) $ between lax $\TTTTT $-algebras $\mathtt{y} $ and $\mathtt{z}$,  
$$\left\langle \overline{\mathtt{f}}\right\rangle = (\varepsilon ^\TTTTT _ {{}_{\mathtt{f} }})^{-1} : \varepsilon ^\TTTTT _ {{}_{\mathtt{z} }} \cdot \underline{\TTTTT } (\mathtt{f} )\Longrightarrow \mathtt{f}\cdot \varepsilon ^\TTTTT _ {{}_{\mathtt{y} }} $$ is an $\mathtt{f}$-isomorphism. Theorem \ref{MAIN} gives the reciprocal to this fact for lax $\TTTTT $-morphisms between $\TTTTT$-pseudoalgebras.

\begin{theo}[Main Theorem]\label{MAIN}
Let $\TTTTT $ be a pseudomonad on a $2$-category $\bbb $. A lax $\TTTTT $-morphism $\mathtt{f}: \mathtt{y}\to\mathtt{z} $ between $\TTTTT $-pseudoalgebras is a $\TTTTT $-pseudomorphism if and only if there is an $\mathtt{f} $-isomorphism.
\end{theo}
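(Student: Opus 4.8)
The forward implication is already recorded just before the statement: if $\mathtt f=(f,\langle\overline{\mathtt f}\rangle)$ is a $\TTTTT$-pseudomorphism, then $\langle\overline{\mathtt f}\rangle$ is invertible and $\varepsilon^{\TTTTT}_{\mathtt f}=\langle\overline{\mathtt f}\rangle^{-1}$ is an $\mathtt f$-isomorphism. So the plan is to prove the converse: given an $\mathtt f$-isomorphism $\psi$, I want to conclude that $\langle\overline{\mathtt f}\rangle$ is invertible. The first observation is that $\psi$ and $\langle\overline{\mathtt f}\rangle$ are $2$-cells of the very same type, namely $\alg_{\mathtt z}\TTTTT(f)\Rightarrow f\alg_{\mathtt y}$, since the underlying $1$-cells of $\varepsilon^{\TTTTT}_{\mathtt z}\cdot\underline{\TTTTT}(\mathtt f)$ and $\mathtt f\cdot\varepsilon^{\TTTTT}_{\mathtt y}$ are $\alg_{\mathtt z}\TTTTT(f)$ and $f\alg_{\mathtt y}$. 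The entire content is thus packaged into the single equation expressing that $\psi$ is a $\TTTTT$-transformation.

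Writing $\mathtt F:=\varepsilon^{\TTTTT}_{\mathtt z}\cdot\underline{\TTTTT}(\mathtt f)$ and $\mathtt H:=\mathtt f\cdot\varepsilon^{\TTTTT}_{\mathtt y}$ for the two lax $\TTTTT$-morphisms $\underline{\TTTTT}\mathtt y\to\mathtt z$, the $\TTTTT$-transformation axiom reads $\langle\overline{\mathtt H}\rangle\circ(\alg_{\mathtt z}\ast\widehat{\TTTTT(\psi)})=(\psi\ast m_{Y})\circ\langle\overline{\mathtt F}\rangle$, where $\ast$ denotes whiskering and the structure morphism of $\underline{\TTTTT}\mathtt y$ is $m_Y$. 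Using the composition rule for lax morphisms together with the explicit descriptions of $\varepsilon^{\TTTTT}_{\mathtt y}$, $\varepsilon^{\TTTTT}_{\mathtt z}$ and $\underline{\TTTTT}(\mathtt f)=(\TTTTT f,m_f^{-1})$, I would compute $\langle\overline{\mathtt F}\rangle$ as a pasting of $\overline{\underline{\mathtt z}}$, $m_f^{-1}$ and coherence cells $\tttt$, and $\langle\overline{\mathtt H}\rangle$ as a pasting of $\overline{\underline{\mathtt y}}$, $\langle\overline{\mathtt f}\rangle$ and coherence cells $\tttt$. The decisive structural remark is that $\mathtt F$ is itself a $\TTTTT$-pseudomorphism: since $\mathtt z$ is a pseudoalgebra, $\overline{\underline{\mathtt z}}$ is invertible, so $\varepsilon^{\TTTTT}_{\mathtt z}$ is a pseudomorphism; and $m_f^{-1}$ is invertible, so $\underline{\TTTTT}(\mathtt f)$ is a pseudomorphism. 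Hence $\langle\overline{\mathtt F}\rangle$ is invertible, and only $\langle\overline{\mathtt H}\rangle$ carries the possibly non-invertible cell $\langle\overline{\mathtt f}\rangle$.

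The main manoeuvre is then to restrict the $\TTTTT$-transformation equation along $\TTTTT(\eta_Y)\colon\TTTTT Y\to\TTTTT^{2}Y$, that is, to whisker both sides on the right with $\TTTTT(\eta_Y)$. On the right-hand side, $m_Y\TTTTT(\eta_Y)\cong\Id_{\TTTTT Y}$ by the invertible unit modification $\tau_Y$, so $(\psi\ast m_Y)\ast\TTTTT(\eta_Y)$ collapses to $\psi$ up to invertible cells; together with the invertibility of $\langle\overline{\mathtt F}\rangle$ and of $\psi$, the whole restricted right-hand side is invertible. On the left-hand side, $\alg_{\mathtt z}\ast\widehat{\TTTTT(\psi)}$ whiskered by $\TTTTT(\eta_Y)$ is invertible because $\TTTTT$ preserves the invertibility of $\psi$; and inside $\langle\overline{\mathtt H}\rangle\ast\TTTTT(\eta_Y)$ the cell $\langle\overline{\mathtt f}\rangle$ occurs whiskered by $\TTTTT(\alg_{\mathtt y})\TTTTT(\eta_Y)\cong\TTTTT(\alg_{\mathtt y}\eta_Y)\cong\TTTTT(\Id_Y)\cong\Id_{\TTTTT Y}$, where the first isomorphism is the pseudofunctoriality constraint $\tttt$ and the second is $\TTTTT(\overline{\underline{\mathtt y}}_0^{-1})$, invertible precisely because $\mathtt y$ is a pseudoalgebra. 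Thus $\langle\overline{\mathtt f}\rangle$ appears essentially un-whiskered, while all remaining factors ($\overline{\underline{\mathtt y}}$, the constraints $\tttt$, and the naturality cells of $\eta$) are invertible. The restricted equation therefore takes the shape $a\circ\langle\overline{\mathtt f}\rangle\circ b=c$ with $a,b,c$ invertible, whence $\langle\overline{\mathtt f}\rangle=a^{-1}\circ c\circ b^{-1}$ is invertible and $\mathtt f$ is a $\TTTTT$-pseudomorphism.

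The hard part will be the coherence bookkeeping in this last step: tracking the pseudofunctoriality constraints $\tttt$ and the pseudonaturality cells $\eta_{g}$ and $m_f$ through the whiskering by $\TTTTT(\eta_Y)$, and verifying that after the reductions $\langle\overline{\mathtt f}\rangle$ survives whiskered only by (a cell isomorphic to) the identity, so that its own invertibility can be concluded. This is exactly why I restrict along $\TTTTT(\eta_Y)$ rather than along $\eta_{\TTTTT Y}$: the latter would leave $\langle\overline{\mathtt f}\rangle$ whiskered by the non-invertible $1$-cell $\eta_Y\alg_{\mathtt y}$, and the invertibility of $\langle\overline{\mathtt f}\rangle\ast(\eta_Y\alg_{\mathtt y})$ does not entail that of $\langle\overline{\mathtt f}\rangle$.
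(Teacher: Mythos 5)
Your proposal is correct and follows essentially the same route as the paper: extract from the $\TTTTT$-transformation axiom for $\psi$ that $\left\langle \overline{\mathtt{f}}\right\rangle$ whiskered by $\TTTTT (\alg _ {{}_{\mathtt{y}}})$ is invertible (all other cells being invertible), then precompose with $\TTTTT (\eta _ {{}_{Y}})$ and use the invertible unit constraint $\overline{{\underline{\mathtt{y}}}}_0$ of the pseudoalgebra $\mathtt{y}$ to undo the whiskering. The only difference is the order of operations (you whisker the whole equation by $\TTTTT (\eta _ {{}_{Y}})$ before isolating $\left\langle \overline{\mathtt{f}}\right\rangle$, the paper isolates first), which is immaterial.
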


\begin{proof}
It remains only to prove that a lax $\TTTTT $-morphism $\mathtt{f}$ is a $\TTTTT $-pseudomorphism provided that there is an 
$\mathtt{f} $-isomorphism. We assume that the structures of the pseudomonad $\TTTTT $, the lax $\TTTTT$-morphism $\mathtt{f}$ and the 
$\TTTTT $-pseudoalgebras  $\mathtt{y} $ and $\mathtt{z} $ are given as in Definition
\ref{laxalgebras}. 

Assume that $\psi : \varepsilon ^\TTTTT _ {{}_{\mathtt{z} }} \cdot \underline{\TTTTT } (\mathtt{f} )\Longrightarrow \mathtt{f}\cdot \varepsilon ^\TTTTT _ {{}_{\mathtt{y} }} $
is an invertible $\TTTTT $-transformation. By the definition of $\TTTTT $-transformation, we conclude that
$$\xymatrix{
\TTTTT  Z
\ar[rr]|-{\alg _ {{}_{\mathtt{z}}} }
\ar@{}[rd]|-{\xRightarrow{\hskip 4pt\overline{{\underline{\mathtt{z}}}} \hskip 8pt} }
&
&
Z
\ar@{}[dd]|-{\xRightarrow{\hskip 7pt \psi \hskip 7pt } }
&
&
&
\TTTTT Z 
\ar@{}[dd]|-{\xRightarrow{\hskip 6pt\widehat{\TTTTT (\psi ) }\hskip 6pt} }
\ar[rr]|-{\alg _ {{}_{\mathtt{z}}} }
&
&
Z
\\
\TTTTT ^2 Z 
\ar[u]^-{\TTTTT (\alg _ {{}_{\mathtt{z}}} ) }
\ar[r]|-{m _ {{}_{Z}} }
\ar@{}[rd]|-{\xRightarrow{ m _ {{}_{f}}^{-1}  } }
&
\TTTTT Z
\ar[ru]|-{\alg _ {{}_{\mathtt{z}}} }
&
&
Y
\ar[lu]|-{f}
\ar@{}[r]|-{=}
&
\TTTTT ^2 Z
\ar[ru]|-{\TTTTT (\alg _ {{}_{\mathtt{z}}} ) }
&
&
\TTTTT Y
\ar@{}[ru]|-{\xRightarrow{\hskip 7pt \left\langle \overline{\mathtt{f}}\right\rangle\hskip 6pt } }
\ar[r]|-{\alg _ {{}_{\mathtt{y}}} }
\ar[lu]|-{\TTTTT (f) }
\ar@{}[rd]|-{\xRightarrow{\hskip 7pt\overline{{\underline{\mathtt{y}}}} \hskip 7pt} }
&
Y
\ar[u]_-{f}
\\
\TTTTT ^2 Y
\ar[rr]|-{m _ {{}_{Y}} }
\ar[u]^-{\TTTTT ^2 (f) }
&
&
\TTTTT Y
\ar[lu]|-{\TTTTT (f) }
\ar[ru]|-{\alg _ {{}_{\mathtt{y}}}}
&
&
&
\TTTTT ^2 Y 
\ar[rr]|-{m _ {{}_{Y}} }
\ar[ru]|-{\TTTTT (\alg _ {{}_{\mathtt{y}}} ) }
\ar[lu]|-{\TTTTT ^2 ( f ) }
&
&
\TTTTT Y
\ar[u]_-{\alg _ {{}_{\mathtt{y}}}}
}$$
holds in $\bbb $,
in which $\widehat{\TTTTT (\psi )} \coloneqq \tttt ^{-1} _ {{}_{(f)(\alg _ {{}_{\mathtt{y}}}) }}  \TTTTT (\psi )\tttt _ {{}_{(\alg _ {{}_{\mathtt{z}}})(\TTTTT(f))  }}$. Since we know that all the $2$-cells above but $\left\langle \overline{\mathtt{f}}\right\rangle $ are invertible, after composing with the appropriate inverses in both sides of the equation, 
we conclude that the horizontal composition 
$ \left\langle \overline{\mathtt{f}}\right\rangle \ast \id _ {{}_{\TTTTT (\alg _ {{}_{\mathtt{y}}}) }} $ is invertible as well.

Therefore, defining  $\widehat{\TTTTT \left( \overline{{\underline{\mathtt{y}}}}_0 \right)} := 
\tttt ^{-1} _ {{}_{\left(\alg _ {{}_{\mathtt{y}}}\right)\left(\eta _{{}_{Y}} \right) }}  
\TTTTT \left(\overline{{\underline{\mathtt{y}}}}_0  \right)\tttt _ {{}_{Z}}$, we conclude that the left hand of the equality 
$$
\xymatrix{
&
\TTTTT Y
\ar[d]|-{\TTTTT (\eta _{{}_{Y}} )}
\ar@{=}@/_5pc/[dd]
\ar@{=}@/^5pc/[dd]
&
&
&
&
\\
&
\TTTTT ^2 Y
\ar@{}[l]|-{\xLeftarrow{\widehat{\TTTTT \left( \overline{{\underline{\mathtt{y}}}}_0 \right)} ^{-1} } }
\ar@{}[r]|-{\xLeftarrow{\widehat{\TTTTT \left( \overline{{\underline{\mathtt{y}}}}_0 \right)}  } }
\ar[d]|-{\TTTTT (\alg _ {{}_{\mathtt{y}}}) }
&
&
&
&
\\
&
\TTTTT Y 
\ar[rd]^{\TTTTT (f) }
\ar[ld]_{\alg _ {{}_{\mathtt{y}}}}
&
&
&
\TTTTT Y
\ar[rd]^{\TTTTT (f) }
\ar[ld]_{\alg _ {{}_{\mathtt{y}}}}
&
\\
Y
\ar[rd]_{f}
\ar@{}[rr]|-{\xLeftarrow{\left\langle \overline{\mathtt{f}}\right\rangle } }
&
&
\TTTTT Z
\ar@{}[r]|-{=}
\ar[ld]^{\alg _ {{}_{\mathtt{z}}} }
&
Y
\ar[rd]_{f}
\ar@{}[rr]|-{\xLeftarrow{\left\langle \overline{\mathtt{f}}\right\rangle } }
&
&
\TTTTT Z
\ar[ld]^{\alg _ {{}_{\mathtt{z}}} }
\\
&
Z
&
&
&
Z
&
} $$
is a (vertical) composition of invertible $2$-cells and, hence, itself invertible.
\end{proof}

\begin{rem}
By doctrinal adjunction~\cite{MR0360749}, we conclude that a lax $\TTTTT $-morphism $\mathtt{f} = (f, \left\langle \overline{\mathtt{f}}\right\rangle ) $
between $\TTTTT $-pseudoalgebras has a right adjoint in $\mathsf{Lax}\textrm{-}\TTTTT\textrm{-}\Alg _ {\ell } $  if and only if $f$ has a right adjoint in the base $2$-category $\bbb $ and  
there is an $\mathtt{f} $-isomorphism.
\end{rem}

\section{Examples}\label{Applications}
In this section, we state some examples. Instead of giving a definitive answer to every particular case, Theorem \ref{MAIN} gives a general 
setting to the problems of non-canonical isomorphisms, giving a general procedure for studying them. 
Namely, in each \textit{non-canonical isomorphism problem}, we can firstly show
 how this problem can be framed in our context and, then, show that such non-canonical isomorphism actually defines a
suitable $\mathtt{f}$-isomorphism.

For example, the result on braided monoidal categories of \cite{MR2864762} is 
a particular instance of Theorem \ref{MAIN}. Firstly, we establish the direct corollary of Theorem \ref{MAIN} on monoidal categories.  
We denote by $\F $ the \textit{free monoid $2$-monad} on $\Cat $.
The underlying $2$-functor of this $2$-monad is given by 
$\F (\AAA ) = \displaystyle\coprod _ {n = 0 }^\infty\AAA ^n $, while each component 
of the multiplication
 is induced by the identities $\AAA ^t \to \AAA ^t $ for each $t$. The $2$-category $\mathsf{Ps}\textrm{-}\F\textrm{-}\Alg _ {\ell } $
is known to be the $2$-category of monoidal categories, monoidal functors and monoidal transformations~\cite{MR1007911, 2016arXiv160703087L}.
Recall that $\F $-pseudomorphisms are called \textit{strong monoidal functors}.

\begin{rem}
We adopt the biased definition of monoidal category.
 Given a monoidal category 
$\MMM = (\MMM _ 0 , \otimes _\MMM , I _ \MMM ) $,  the monoidal product
$\otimes _{\underline{\F } \MMM } :  \left(\underline{\F } \MMM\right)_0\times \left(\underline{\F } \MMM\right)_0
\to   \left(\underline{\F } \MMM\right)_0 $ of the strict monoidal category $\underline{\F } \MMM $
is defined firstly by taking the isomorphism $\displaystyle\left(\coprod  _ {k = 0 }^\infty \MMM _ 0 ^k\right)\times \left(\coprod  _ {j = 0 }^\infty \MMM _ 0 ^j\right)\cong \coprod  _ {k,j\in\mathbb{N}  } \MMM _ 0 ^{k+j} $ and composing with the morphism induced by the
canonical inclusions of the coproduct $\displaystyle \coprod  _ {k = 0 }^\infty \MMM _ 0 ^{k+t}\to\coprod  _ {n = 0 }^\infty \MMM _ 0 ^{n} $ for each $t$.
In other words, an object of $\left(\underline{\F } \MMM\right)_0\times \left(\underline{\F } \MMM\right)_0 $ is an ordered pair of words of objects in $\MMM _ 0 $, while the tensor product is just the word obtained by juxtaposition. The empty word is the identity of the monoidal structure of  $\underline{\F } \MMM $.

It should be noted that the component $\varepsilon ^\F _ {{}_{\MMM }}: \underline{\F } \MMM \to \MMM $ is a strong monoidal functor. We consider
that its underlying functor gives the monoidal product of the objects in the word respecting the order, that is to say, it is defined inductively by 
\begin{eqnarray*}
\alg _{{}_{\MMM }} () &:= & I _ {\MMM };\\
\alg _{{}_{\MMM }} (x_1) &:= & x_1;\\
\alg _ {{}_{\MMM }}  (x_1 , \ldots , x_{n+1} ) &:= & \alg _ {{}_{\MMM }}  (x_1 , \ldots , x_{n} )\otimes _\MMM   \alg _ {{}_{\MMM }}  (x_{n+1} )
\end{eqnarray*}  
in which $() $ denotes the object of $\MMM _ 0 ^0 $, which is the identity object of $\underline{\F } \MMM $, and $(x_1, \ldots , x_n ) $ denotes an object of $\MMM _0 ^n $. Finally, for each monoidal functor $\mathtt{f} = (f, \left\langle \overline{\mathtt{f}}\right\rangle ) $, 
$\underline{\F}(\mathtt{f}) $ is the strict monoidal functor defined pointwise by $f$.  
\end{rem}

\begin{coro}[Strong monoidal functors]\label{Corolario para Monoidais}
Let $\mathtt{f} = (f, \left\langle \overline{\mathtt{f}}\right\rangle ) : \MMM\to\NNN $ be a monoidal functor. There is an invertible monoidal transformation $\psi :\varepsilon ^\F _ {{}_{\NNN }} \cdot \underline{\F } (\mathtt{f} )\Longrightarrow \mathtt{f}\cdot \varepsilon ^\F _ {{}_{\MMM }}   $ if and only if $\mathtt{f} $ is a strong monoidal functor or, in other words, $\left\langle \overline{\mathtt{f}}\right\rangle $ 
is invertible.
\end{coro}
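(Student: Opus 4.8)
The plan is to obtain the statement as an immediate specialization of Theorem \ref{MAIN}. First I would observe that the free monoid $2$-monad $\F$ is in particular a pseudomonad on $\Cat$, so that Theorem \ref{MAIN} applies with $\TTTTT = \F$. Next I would invoke the identifications recorded above: the objects of $\mathsf{Ps}\textrm{-}\F\textrm{-}\Alg _ {\ell }$ are exactly the monoidal categories, so $\MMM$ and $\NNN$ are $\F$-pseudoalgebras; a monoidal functor $\mathtt{f} = (f, \left\langle \overline{\mathtt{f}}\right\rangle )$ is exactly a lax $\F$-morphism; and, by definition, $\mathtt{f}$ is a strong monoidal functor precisely when it is an $\F$-pseudomorphism, that is, when $\left\langle \overline{\mathtt{f}}\right\rangle$ is invertible.

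With these translations in place, the core of the argument is to recognize that an invertible monoidal transformation $\psi : \varepsilon ^\F _ {{}_{\NNN }} \cdot \underline{\F } (\mathtt{f} )\Longrightarrow \mathtt{f}\cdot \varepsilon ^\F _ {{}_{\MMM }}$ is nothing but an $\mathtt{f}$-isomorphism in the sense of Definition \ref{noncanonicalisomorphism}. Indeed, monoidal transformations coincide with $\F$-transformations, and the source and target $1$-cells $\varepsilon ^\F _ {{}_{\NNN }} \cdot \underline{\F } (\mathtt{f} )$ and $\mathtt{f}\cdot \varepsilon ^\F _ {{}_{\MMM }}$ are literally those appearing in that definition. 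Theorem \ref{MAIN}, applied to $\TTTTT = \F$, $\mathtt{y} = \MMM$ and $\mathtt{z} = \NNN$, then yields exactly the claimed equivalence.

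The only genuine bookkeeping --- and the place where I expect the main, albeit modest, obstacle to lie --- is confirming that the explicit descriptions of $\varepsilon ^\F$ and $\underline{\F }$ given in the preceding remark agree with the abstract constructions coming from the Eilenberg-Moore biadjunction and pseudocomonad, so that the phrases \emph{invertible monoidal transformation $\psi$} and \emph{$\mathtt{f}$-isomorphism} really name the same data. Since this matching is already implicit in the standard identification of $\mathsf{Ps}\textrm{-}\F\textrm{-}\Alg _ {\ell }$ with the $2$-category of monoidal categories, no further computation is needed, and the corollary follows directly from Theorem \ref{MAIN}.
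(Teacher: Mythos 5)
Your proposal is correct and coincides with the paper's treatment: the corollary is stated there precisely as the direct specialization of Theorem \ref{MAIN} to $\TTTTT = \F$, using the standard identification of $\mathsf{Ps}\textrm{-}\F\textrm{-}\Alg_{\ell}$ with the $2$-category of monoidal categories, monoidal functors and monoidal transformations, so that an invertible monoidal transformation $\psi$ is exactly an $\mathtt{f}$-isomorphism. No further argument is given or needed in the paper beyond this translation.
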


Given a
braided monoidal category $\MMM = (\MMM _0 , \otimes _ \MMM , I _\MMM , \lambda _ \MMM ) $, we have an induced strong 
monoidal functor
 $\MMM\times \MMM\to \MMM $
 whose underlying functor
is $\otimes _\MMM $ and the structure maps are given by the isomorphisms 
$w\otimes x\otimes y\otimes z\to w\otimes y\otimes x\otimes z $ and $ I _ \MMM \otimes I _ \MMM \to I _ \MMM $
induced respectively by the braiding $\lambda _ \MMM $ and the action of the identity. In Theorem \ref{Teorema do Lack},
we denote this strong monoidal functor just by $\otimes _\MMM $.

Recall that a monoidal functor $\mathtt{f} = (f, \left\langle \overline{\mathtt{f}}\right\rangle )$ is called normal if the component of 
$\left\langle \overline{\mathtt{f}}\right\rangle $ on the empty word, denoted below by $\left\langle \overline{\mathtt{f}}\right\rangle _{()} $,  is invertible.

\begin{coro}[\cite{MR2864762}]\label{Teorema do Lack}
Let $\MMM , \NNN $ be braided monoidal categories, and $\mathtt{f}= (f, \left\langle \overline{\mathtt{f}}\right\rangle ):\MMM \to \NNN  $ a normal monoidal functor.
If we have a invertible monoidal transformation
$$\xymatrix{
\MMM\times \MMM 
\ar[r]^-{\mathtt{f}\times \mathtt{f} }
\ar[d]_-{\otimes _ {{}_{\MMM }}  }
\ar@{}[rd]|-{\xLeftarrow{\hskip 1em\varphi\hskip 1em } }
&
\NNN\times\NNN
\ar[d]^-{\otimes _ {{}_{\NNN }}  }
\\
\MMM
\ar[r]_-{\mathtt{f} }
&
\NNN
}$$
then $\mathtt{f} $ is a strong monoidal functor.
\end{coro}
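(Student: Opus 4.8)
The plan is to deduce the statement from Corollary~\ref{Corolario para Monoidais}, which is the instance $\TTTTT=\F$ of Theorem~\ref{MAIN} for the free monoid $2$-monad. Since $\mathtt{f}$ is a monoidal functor, it is already a lax $\F$-morphism between the $\F$-pseudoalgebras $\MMM$ and $\NNN$, so by Corollary~\ref{Corolario para Monoidais} it is enough to exhibit an invertible monoidal transformation (an $\mathtt{f}$-isomorphism)
\[
\psi : \varepsilon^\F_{{}_{\NNN}}\cdot\underline{\F}(\mathtt{f})\Longrightarrow \mathtt{f}\cdot\varepsilon^\F_{{}_{\MMM}}.
\]
The data available to us are the invertible components $\varphi_{(x,y)} : f(x)\otimes_\NNN f(y)\to f(x\otimes_\MMM y)$ of the given non-canonical isomorphism and the invertibility of the empty-word component $\left\langle \overline{\mathtt{f}}\right\rangle_{()}$ coming from normality; note that $\varphi$ need not coincide with the canonical structure $\left\langle \overline{\mathtt{f}}\right\rangle$, which is exactly what makes the isomorphism non-canonical.

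Unwinding the definitions of $\underline{\F}$ and $\varepsilon^\F$, the component of $\psi$ at a word $(x_1,\dots,x_n)$ must have the form
\[
\psi_{(x_1,\dots,x_n)} : f(x_1)\otimes_\NNN\cdots\otimes_\NNN f(x_n)\longrightarrow f\bigl(x_1\otimes_\MMM\cdots\otimes_\MMM x_n\bigr),
\]
both sides being the left-associated iterated tensors dictated by the inductive definition of $\alg_{{}_{\MMM}}$. I define $\psi$ by induction on the length $n$, setting $\psi_{()} := \left\langle \overline{\mathtt{f}}\right\rangle_{()}$, $\psi_{(x_1)} := \id_{f(x_1)}$, and
\[
\psi_{(x_1,\dots,x_{n+1})} := \varphi_{(x_1\otimes_\MMM\cdots\otimes_\MMM x_n,\, x_{n+1})}\circ\bigl(\psi_{(x_1,\dots,x_n)}\otimes_\NNN \id_{f(x_{n+1})}\bigr).
\]
Each component is then a composite of the invertible maps $\varphi$ and of the invertible base case, so $\psi$ is objectwise invertible, and its naturality in $(x_1,\dots,x_n)$ is inherited from that of $\varphi$.

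The substantive point, and the step I expect to be the main obstacle, is to check that this $\psi$ is genuinely a monoidal transformation, i.e.\ an $\F$-transformation. Concretely one verifies, by induction on word lengths, the compatibility of $\psi$ with the unit and multiplication constraints of the two composite monoidal functors $\varepsilon^\F_{{}_{\NNN}}\cdot\underline{\F}(\mathtt{f})$ and $\mathtt{f}\cdot\varepsilon^\F_{{}_{\MMM}}$. The unit axiom reduces to the normality equation for $\left\langle \overline{\mathtt{f}}\right\rangle_{()}$, while the multiplication axiom reduces, through the defining recursion of $\psi$, to the case of two words of length two. It is exactly here that the braiding is used: the structure constraints of the strong monoidal functors $\otimes_\MMM$ and $\otimes_\NNN$ are the isomorphisms $w\otimes x\otimes y\otimes z\to w\otimes y\otimes x\otimes z$ induced by the braidings of $\MMM$ and $\NNN$, and the assertion that $\varphi$ is a \emph{monoidal} transformation between $\otimes_\NNN\cdot(\mathtt{f}\times\mathtt{f})$ and $\mathtt{f}\cdot\otimes_\MMM$ is precisely the coherence identity that allows one to carry $\psi$ across a concatenation. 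I would therefore isolate the monoidal-naturality square of $\varphi$ on a pair of length-two words and show, using braided coherence, that it supplies the inductive step; the general case then follows by the standard reassociation argument. Once $\psi$ is known to be an invertible monoidal transformation, Corollary~\ref{Corolario para Monoidais} yields that $\left\langle \overline{\mathtt{f}}\right\rangle$ is invertible, i.e.\ that $\mathtt{f}$ is strong monoidal.
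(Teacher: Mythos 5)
Your proof is correct and follows essentially the same route as the paper: the paper's entire argument is precisely the inductive definition $\psi_{()}:=\left\langle \overline{\mathtt{f}}\right\rangle_{()}$, $\psi_{(x_1)}:=\id_{f(x_1)}$, $\psi_{(x_1,\dots,x_{n+1})}:=\varphi\cdot(\psi_{(x_1,\dots,x_n)}\otimes\id)$ followed by an appeal to Corollary~\ref{Corolario para Monoidais}. If anything, your write-up is more careful than the paper's (your indexing of $\varphi$ by objects of $\MMM$ and your use of $\otimes_{\NNN}$ in the inductive step fix what appear to be typos in the paper's displayed formula, and you at least sketch the verification that $\psi$ is a monoidal transformation, which the paper omits entirely).
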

\begin{proof}
In fact, from $\varphi $, we define an invertible monoidal transformation $\psi :\varepsilon ^\F _ {{}_{\NNN }} \cdot \underline{\F } (\mathtt{f} )\Longrightarrow \mathtt{f}\cdot \varepsilon ^\F _ {{}_{\MMM }}   $
as in Corollary \ref{Corolario para Monoidais} inductively as follows:
\begin{eqnarray*}
\psi _ {{}_{()}} &:= & \left\langle \overline{\mathtt{f}}\right\rangle _ {()} \\
\psi _ {{}_{(x_1)}} &:= & \id _ {{}_{f(x_1) }} \\
\psi _ {{}_{(x_1, \ldots , x_n , x_{n+1})}} &:= & \varphi _ {{}_{(f(x_1)\otimes _\NNN\cdots \otimes _ \NNN f(x_{n}),  x_{n+1} )}} \cdot \left( \psi _ {{}_{(x_1, \ldots , x_n )}}\otimes _ \MMM \id _ {{}_{f(x_ {n+1})}} \right).
\end{eqnarray*}
\end{proof}

As noted therein, Corollary \ref{Teorema do Lack} encompasses all the common parts of the examples presented in \cite{MR2864762}, mentioned in the introduction. This includes the non-canonical isomorphism problem of preservation of coproducts studied by Caccamo-Winskel~\cite{MR2207108}.

\subsection{Kock-Z\"{o}berlein pseudomonads}

Kock-Z\"{o}berlein pseudomonads provide examples in which the setting of Theorem \ref{MAIN} is simpler. 
This is mostly due to the fact that  Kock-Z\"{o}berlein pseudomonads~\cite{MR1359690, MR1432190, MR1476422} satisfy the hypothesis of:

\begin{coro}\label{lff}
Assume that $\TTTTT $ is a pseudomonad on $\bbb$ such that $\mathsf{Ps}\textrm{-}\TTTTT\textrm{-}\Alg _ {\ell}\to \bbb $
is locally fully faithful. Assume that $\mathtt{f}= (f, \left\langle \overline{\mathtt{f}}\right\rangle ):\mathtt{y}\to\mathtt{z} $ 
is a lax $\TTTTT $-morphism between the $\TTTTT$-pseudoalgebras $\mathtt{y}=(Y, \alg _ {{}_{\mathtt{y}}}, \overline{{\underline{\mathtt{y}}}}, \overline{{\underline{\mathtt{y}}}}_0 )$ and $\mathtt{z}= (Z, \alg _ {{}_{\mathtt{z}}}, \overline{{\underline{\mathtt{z}}}}, \overline{{\underline{\mathtt{z}}}}_0 )$. There is an invertible $2$-cell
$$\xymatrix@C=5em{
\TTTTT Y
\ar[r]^{\TTTTT  ( f ) }
\ar[d]_{\alg _ {{}_{\mathtt{y} }}  }
\ar@{}[rd]|-{\xLeftarrow{\hskip 1em\psi  \hskip 0.9em } }
&
\TTTTT Z
\ar[d]^{\alg _ {{}_{\mathtt{z} }}  }
\\
Y
\ar[r]_{f}
&
Z
} $$
in $\bbb $ if and only if $\mathtt{f} $ is a $\TTTTT $-pseudomorphism.
\end{coro}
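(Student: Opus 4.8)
The plan is to reduce the statement to Theorem~\ref{MAIN} by exploiting the local full faithfulness hypothesis to upgrade a bare invertible $2$-cell of $\bbb $ into an $\mathtt{f}$-isomorphism. The ``only if'' direction is immediate: if $\mathtt{f} $ is a $\TTTTT $-pseudomorphism, then by definition the structure $2$-cell $\left\langle \overline{\mathtt{f}}\right\rangle : \alg _ {{}_{\mathtt{z}}}\TTTTT (f)\Rightarrow f\alg _ {{}_{\mathtt{y}}} $ is invertible, and it already has the shape required of $\psi $.

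For the converse, first I would observe that the two lax $\TTTTT $-morphisms $\varepsilon ^\TTTTT _ {{}_{\mathtt{z} }}\cdot \underline{\TTTTT }(\mathtt{f}) $ and $\mathtt{f}\cdot \varepsilon ^\TTTTT _ {{}_{\mathtt{y} }} $ both have domain $\underline{\TTTTT }\mathtt{y} $ and codomain $\mathtt{z} $. The latter is a $\TTTTT $-pseudoalgebra by hypothesis, while the former is the \emph{free} $\TTTTT $-pseudoalgebra on $Y $, hence in particular a $\TTTTT $-pseudoalgebra; thus both morphisms live in $\mathsf{Ps}\textrm{-}\TTTTT\textrm{-}\Alg _ {\ell } $. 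Unwinding the definitions, their underlying morphisms in $\bbb $ are precisely $\alg _ {{}_{\mathtt{z}}}\TTTTT (f) $ and $f\alg _ {{}_{\mathtt{y}}} $, which are exactly the source and target of the given $2$-cell $\psi $.

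Next I would invoke the hypothesis. Since the forgetful $2$-functor $\mathsf{Ps}\textrm{-}\TTTTT\textrm{-}\Alg _ {\ell }\to \bbb $ is locally fully faithful, the map sending a $\TTTTT $-transformation between these two lax morphisms to its underlying $2$-cell of $\bbb $ is a bijection. In other words, the $\TTTTT $-transformation coherence equation holds automatically, so $\psi $ is (uniquely) the underlying $2$-cell of a $\TTTTT $-transformation $\widetilde{\psi }:\varepsilon ^\TTTTT _ {{}_{\mathtt{z} }}\cdot \underline{\TTTTT }(\mathtt{f})\Rightarrow \mathtt{f}\cdot \varepsilon ^\TTTTT _ {{}_{\mathtt{y} }} $. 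Moreover, a fully faithful functor reflects isomorphisms: applying the same lifting to $\psi ^{-1} $ and using uniqueness of lifts of identities, one sees that $\widetilde{\psi } $ is an invertible $\TTTTT $-transformation. Hence $\widetilde{\psi } $ is an $\mathtt{f} $-isomorphism in the sense of Definition~\ref{noncanonicalisomorphism}, and Theorem~\ref{MAIN} gives that $\mathtt{f} $ is a $\TTTTT $-pseudomorphism.

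I do not expect a genuine obstacle here: the entire weight of the argument rests on correctly reading ``locally fully faithful'' as the statement that every $2$-cell of $\bbb $ between the underlying morphisms of lax morphisms between pseudoalgebras lifts uniquely to a $\TTTTT $-transformation, together with the elementary fact that fully faithful functors reflect invertibility. The only point requiring a moment's care is checking that the relevant domain and codomain ($\underline{\TTTTT }\mathtt{y} $ and $\mathtt{z} $) are genuinely $\TTTTT $-pseudoalgebras, so that the hypothesis applies, and confirming that their underlying morphisms match the source and target of $\psi $.
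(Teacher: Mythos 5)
Your proposal is correct and follows exactly the paper's (much terser) argument: use local full faithfulness of $\mathsf{Ps}\textrm{-}\TTTTT\textrm{-}\Alg_{\ell}\to\bbb$ to lift the invertible $2$-cell $\psi$ to an $\mathtt{f}$-isomorphism between $\varepsilon^\TTTTT_{{}_{\mathtt{z}}}\cdot\underline{\TTTTT}(\mathtt{f})$ and $\mathtt{f}\cdot\varepsilon^\TTTTT_{{}_{\mathtt{y}}}$, then invoke Theorem~\ref{MAIN}. You simply supply the details (matching of underlying $1$-cells, reflection of invertibility) that the paper leaves implicit.
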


\begin{proof}
By hypothesis, every such an invertible $2$-cell gives an $\mathtt{f}$-isomorphism. Therefore the result follows from Theorem \ref{MAIN}.
\end{proof}

Cocompletion pseudomonads~\cite{MR1776428} are examples of Kock-Z\"{o}berlein pseudomonads. Hence, in particular, Corollary
\ref{lff} shows how Theorem \ref{MAIN} encompasses the non-canonical isomorphism problem of preservation 
of conical colimits studied in \cite{MR2207108} and, more generally, the non-canonical isomorphisms for 
preservation of weighted colimits~\cite{MR1765587, MR1776428}.

\begin{rem}[Binary coproducts]
We exemplify how the problem of preservation of conical colimits is framed in our setting by showing the case of
binary coproducts. In order to do so, we consider the $2$-monad of the 
free cocompletion by binary coproducts, called herein $\mathrm{Fam}_{\mathrm{f}} $. 

Recall that the objects of 
$\mathrm{Fam}_{\mathrm{f}} (\AAA ) $ are non-empty finite families of objects in $\AAA $, which can
be seen as lists of objects $(x_1 , \ldots , x_{n} )$. A morphism 
$(x_1 , \ldots , x_{n} )\to (y_1 , \ldots , y_{m} )$ is a list $(t _0, t_1 , \ldots , t_ n)$ in which 
$$t _ 0: \left\{1, \ldots , n\right\}\to \left\{1, \ldots , m\right\} $$
is a function and, for $j>0$, $t_j : x_j\to y_ {{}_{t_0 (j)} } $  is a morphism of $\AAA $. 

Assuming that $\AAA $ has binary coproducts, we denote the structure by $\alg _\AAA :  \mathrm{Fam}_{\mathrm{f}} (\AAA ) \to \AAA $. It should be noted that $\alg _\AAA  (t _0, t_1 , \ldots , t_ n)$ is the composition of the arrows below, in which the second arrow is induced by the 
morphisms $ y_{{}_{t_0(j)}}\to \coprod _{i=1}^m y_i $ of the universal cocone.
$$\xymatrix{\displaystyle\coprod _{j=1}^n x_j \ar[rr]^-{\coprod _{j=1}^n (t_j) }&&  \displaystyle\coprod _{j=1}^n y_ {{}_{t_0 (j)}}
\ar[r] &  \displaystyle\coprod _{i=1}^m y_i  } $$

Let $F: \AAA \to \BBB $ be a functor between categories with binary coproducts. On one hand, assuming that
$\AAA $ has initial object $O$ and that $F$ preserves it, given an isomorphism $\alpha _ {(x_1, x_2)}: F(x_1)\sqcup F(x_2)\to F(x_1\sqcup x_2) $ natural in $x_1 $ and $x_2$, we can define $\alpha ' _ {(y_1)} $ to be the composition of the arrows
$$\xymatrix{F(y_1) \ar[r]& F(y_1)\sqcup F(O)\ar[rr]^{\alpha _ {(y_1, O)} } && F(y_1\sqcup O ) \ar[r] & F(y_1)    }$$
in which $F(y_1\sqcup O )\to F(y_1) $ is the image of the inverse of the canonical morphism
$y_1\to y_1\sqcup O $ and the other non-labeled arrow is the canonical one. Then, we put 
$\alpha ' _ {(x_1, x_2)}\coloneqq \alpha _ {(x_1, x_2)} $ and  
we can define 
$\alpha ' _ {(x_1, \ldots , x_n)} $ inductively. This gives a natural isomorphism 
$$\alpha ' : \alg _\BBB\circ \mathrm{Fam}_{\mathrm{f}}(F)\longrightarrow  F\circ \alg _\AAA $$
implying, by Corollary \ref{lff}, that $F$ preserves finite coproducts. This proves Theorem 3.3 of \cite{MR2207108}.

On the other hand, clearly there are functors that preserve binary coproducts but do not preserve initial objects. 
The obvious example is the inclusion of the codomain $d^0: \mathsf{1}\to\mathsf{2} $, in which $\mathsf{1} $ is the
terminal category with only the object $\mathsf{0} $ and $\mathsf{2} $ is the category $\mathsf{0}\to\mathsf{1}$.

Yet, the existence of a natural isomorphism $\alpha _ {(x_1, x_2)}: F(x_1)\sqcup F(x_2)\to F(x_1\sqcup x_2) $
does not suffice to construct an $F$-isomorphism as above, or, equivalently, to get the preservation of
binary coproducts. A counterexample for preservation of
binary products is given at the end of Section 2 of \cite{MR2207108}. 
The general (dual) idea of that counterexample is to consider a functor $G:\mathsf{1}\to \AAA $.
A natural isomorphism is just an isomorphism $ G(\mathsf{0} )\sqcup G(\mathsf{0} )\cong G(\mathsf{0} ) $, while
the canonical comparison is the codiagonal (morphism induced by the identities). In the category of sets, the
codiagonal
$\mathbb{N}\sqcup \mathbb{N}\to \mathbb{N} $ is not an isomorphism, although
there is an obvious isomorphism $\mathbb{N}\sqcup \mathbb{N}\cong \mathbb{N} $.

However, for instance, it is easy to see that the existence of 
 an $F$-isomorphism is equivalent to the existence of a natural isomorphism $\beta: F\longrightarrow F $
such that, for each pair $(x_1 , x_2) $, the morphism $F(x_1)\sqcup F(x_2)\to F(x_1\sqcup x_2 ) $
induced by the morphisms   
$$F(x_1)\stackrel{\beta _ {{}_{x_1}}}{\rightarrow} F(x_1)\rightarrow F(x_1\sqcup x_2) \quad\mbox{and}\quad F(x_2)\stackrel{\beta _ {{}_{x_2}}}{\rightarrow} F(x_2)\rightarrow F(x_1\sqcup x_2)$$
is invertible, in which the unlabeled arrows are the images of the canonical morphisms. 

\end{rem}

\bibliographystyle{plain}
\bibliography{references}

 \end{document}